\theoremstyle{plain}
\newtheorem{assumption}{Assumption}
\newtheorem{condition}{Condition}
\newtheorem{corollary}{Corollary}
\newtheorem{theorem}{Theorem}
\newcounter{comment}\setcounter{comment}{0}
\setlist[enumerate,1]{leftmargin=*,wide=0em, label = {\bfseries \arabic*.}}
\setlist[itemize,1]{leftmargin=*,wide=0em}
\renewcommand {\AA}  { {\mathbf{A}} }
\newcommand {\HH}  { {\mathbf{H}} }
\newcommand {\VV}  { {\mathbf{V}} }
\newcommand{\eye}{\mathbf{I}}
\newcommand {\bgg}  { {\bf g} }
\newcommand {\xx}  { {\bf x} }
\newcommand {\yy}  { {\bf y} }
\newcommand {\pp}  { {\bf p} }
\newcommand {\vv}  { {\bf v} }
\newcommand {\ww}  { {\bf w} }
\newcommand {\bb}  { {\bf b} }
\newcommand {\zero}  { {\bf 0} }
\renewcommand{\vec}[1]{\ensuremath{\mathbf{#1}}}
\newcommand{\grad}{\ensuremath {\vec \nabla}}
\newcommand{\defeq}{\triangleq}
\newcommand\bbR{\ensuremath{\mathbb{R}}} 
\definecolor{mygreen}{rgb}{0,0.6,0}
\definecolor{mygray}{rgb}{0.5,0.5,0.5}
\definecolor{mymauve}{rgb}{0.58,0,0.82}
\tiny\color{mygray}, 
\begin{document}

\title{DINO: Distributed Newton-Type Optimization Method}
\author{
    Rixon Crane\footnote{School of Mathematics and Physics, University of Queensland, Australia. Email: r.crane@uq.edu.au} 
    \qquad 
    Fred Roosta\footnote{School of Mathematics and Physics, University of Queensland, Australia, and International Computer Science Institute, Berkeley, USA. Email: fred.roosta@uq.edu.au}
}
\maketitle

\begin{abstract}
We present a novel communication-efficient Newton-type algorithm for finite-sum optimization over a distributed computing environment. Our method, named DINO, overcomes both theoretical and practical shortcomings of similar existing methods. Under minimal assumptions, we guarantee global sub-linear convergence of DINO to a first-order stationary point for general non-convex functions and arbitrary data distribution over the network. Furthermore, for functions satisfying Polyak-Lojasiewicz (PL) inequality, we show that DINO enjoys a linear convergence rate. Our proposed algorithm is practically parameter free, in that it will converge regardless of the selected hyper-parameters, which are easy to tune. Additionally, its sub-problems are simple linear least-squares, for which efficient solvers exist. Numerical simulations demonstrate the efficiency of DINO as compared with similar alternatives.
\end{abstract}

\section{Introduction}\label{introduction}

Consider the generic finite-sum optimization problem
\begin{equation}\label{eq: objective function}
    \min_{\ww \in \bbR^{d}} \bigg\{ f(\ww) \triangleq \frac{1}{m} \sum_{i = 1}^{m} f_{i}(\ww) \bigg\},
\end{equation}
in a centralized distributed computing environment comprised of one central driver machine communicating to $m$ worker machines, where each worker $i$ only has access to $f_i$.
A common application of this problem is where each worker $i$ has access to a portion of $n$ data points $\{\xx_1,\ldots,\xx_n\}$, indexed by a set $S_i\subseteq\{1,\ldots,n\}$, with
\begin{equation}\label{eq: objective function 2}
    f_i(\ww) = \frac{|S_i|}{n}\sum_{j\in S_i}\ell_j(\ww;\xx_j),
\end{equation}
where $\ell_i$ is a loss function corresponding to $\xx_i$ and parameterized by $\ww$.
For example, such settings are popular in industry in the form of federated learning when data is collected and processed locally, which increases computing resources and facilitates data privacy \cite{agarwal2018cpsgd}.
Another example is in big data regimes, where it is more practical, or even necessary, to partition and store large datasets on separate machines \cite{DiSCO}.
Distributing machine learning model parameters is also becoming a necessity as some highly successful models now contain billions of parameters, such as GPT-2 \cite{radford2019language, lee2014model}.

The need for distributed computing has motivated the development of many frameworks.
For example, the popular machine learning packages PyTorch \cite{PyTorch} and Tensorflow \cite{TensorFlow} contain comprehensive functionality for distributed training of machine learning models.
Despite many benefits to distributed computing, there are significant computational bottlenecks, such as those introduced through bandwidth and latency \cite{shamir2014distributed, li2014communication, wangni2018gradient}.
Frequent transmission of data is expensive in terms of time and physical resources \cite{DiSCO}.
For example, even transferring data on a local machine can be the main contributor to energy consumption \cite{shalf}.

With the bottleneck of communication, there has recently been significant focus on developing communication-efficient distributed optimization algorithms. 
This is particularly apparent in regards to popular first-order methods, such as stochastic gradient descent (SGD) \cite{haddadpour2019trading, ivkin2019communication, vogels2019powersgd, basu2019qsparse, teng2019leader, zheng2019communication}.
As first-order methods solely rely on gradient information, which can often be computed easily in parallel, they are usually straightforward to implement in a distributed setting.
However, their typical inherent nature of performing many computationally inexpensive iterations, which is suitable and desirable on a single machine, leads to significant data transmission costs and ineffective utilization of increased computing resources in a distributed computing environment \cite{GIANT}.

\subsection*{Related Work}
In contrast to first-order methods, second-order methods perform more computation per iteration and, as a result, often require far fewer iterations to achieve similar results. 
In distributed settings, these properties directly translate to more efficient utilization of the increased computing resources and far fewer communications over the network. 
Motivated by this potential, several distributed Newton-type algorithms have recently been developed, most notably DANE \cite{DANE}, DiSCO \cite{DiSCO}, InexactDANE and AIDE \cite{AIDE}, GIANT \cite{GIANT}, and DINGO \cite{crane2019dingo}.

While each of these second-order distributed methods have notable benefits, they all come with disadvantages that limit their applicability. 
DiSCO and GIANT are simple to implement, as they involve sub-problems in the form of linear systems. 
Whereas, the sub-problems of InexactDANE and AIDE involve non-linear optimization problems and their hyper-parameters are difficult and time consuming to tune.
DiSCO and GIANT rely on strong-convexity assumptions, and GIANT theoretically requires particular function form and data distribution over the network in \eqref{eq: objective function 2}.
In contrast, InexactDANE and AIDE are applicable to non-convex objectives.
DINGO’s motivation is to not require strong-convexity assumptions, i.e., it converges for invex problems \cite{mishra2008invexity}, and still be easy to use in practice, i.e., simple to tune hyper-parameters and linear-least squares sub-problems. 
DINGO achieves this by optimizing the norm of the gradient as a surrogate function. 
Thus, it may converge to a local maximum or saddle point in non-invex problems. 
Moreover, the theoretical analysis of DINGO is limited to exact update.

\begin{table*}[t]
    \caption{Comparison of problem class, function form and data distribution. 
    DINGO is suited to invex problems in practice, as it may converge to a local maximum or saddle point in non-invex problems \cite{crane2019dingo}.
    This is a modified table from \cite{crane2019dingo}.
    }\label{table: 1}
    \centering
    \small
    \begin{tabular}{@{} lccc @{}}
        \toprule
        & Problem Class & Function Form & Data Distribution \\
        \midrule
        \textbf{DINO} & Non-Convex & Any & Any \\
        \textbf{DINGO} \cite{crane2019dingo} & Invex & Any & Any \\
        \textbf{GIANT} \cite{GIANT} & Strongly Convex & $\ell_j(\ww; \xx_{j}) = \psi_j(\langle \ww, \xx_{j} \rangle) + \gamma \|\ww\|^{2}$ in \eqref{eq: objective function 2} & $|S_{i}| > d$ in \eqref{eq: objective function 2} \\
        \textbf{DiSCO} \cite{DiSCO} & Strongly Convex & Any & Any \\
        \textbf{InexactDANE} \cite{AIDE} & Non-Convex & Any & Any \\
        \textbf{AIDE} \cite{AIDE} & Non-Convex & Any & Any \\
        \bottomrule
    \end{tabular}
\end{table*}

\begin{table*}[t]
    \caption{
    Comparison of number of hyper-parameters (under exact update) and communication rounds per iteration (under inexact update).
    Additional hyper-parameters may be introduced under inexact update, such as in the solver used for the non-linear optimization sub-problems of InexactDANE and AIDE.
    We assume DINO, DINGO and GIANT use two communication rounds per iteration for line-search.
    This is a modified table from \cite{crane2019dingo}.
    }\label{table: 2}
    \centering
    \small
    \begin{tabular}{@{} lccc @{}}
        \toprule
        & Number of {Hyper-parameters} & Communication Rounds Per \\
        & (Under Exact Update) & Iteration (Under Inexact Update) \\
        \midrule
        \textbf{DINO}                           & $2$ & $6$     \\
        \textbf{DINGO} \cite{crane2019dingo}    & $2$ & $4$ to $8$ \\
        \textbf{GIANT} \cite{GIANT}             & $0$ & $6$     \\
        \textbf{DiSCO} \cite{DiSCO}             & $0$ & $2+2\cdot(\text{sub-problem iterations})$     \\
        \textbf{InexactDANE} \cite{AIDE}        & $2$ & $4$     \\
        \textbf{AIDE} \cite{AIDE}               & $3$ & $4\cdot(\text{inner InexactDANE iterations})$ \\
        \bottomrule
    \end{tabular}
\end{table*}

\subsection*{Contributions}
We present a novel communication-efficient distributed second-order optimization algorithm that combines many of the above-mentioned desirable properties. 
Our algorithm is named DINO, for ``\textbf{DI}stributed \textbf{N}ewton-type \textbf{O}ptimization method".
Our method is inspired by the novel approach of DINGO, which allowed it to circumvent various theoretical and practical issues of other methods.
However, unlike DINGO, we minimize \eqref{eq: objective function} directly and our analysis involves less assumptions and is under inexact update; see Tables \ref{table: 1} and \ref{table: 2} for high-level algorithm properties.

A summary of our contributions is as follows.

\begin{enumerate}
    \item The analysis of DINO is simple, intuitive, requires very minimal assumptions and can be applied to arbitrary non-convex functions.
    Namely, by requiring only Lipschitz continuous gradients $\grad f_i$, we show global sub-linear convergence for general non-convex \eqref{eq: objective function}.
    Recall that additional assumptions are typically required for the analysis of second-order methods.
    Such common assumptions include strong convexity, e.g., in \cite{roosta2019sub}, and Lipschitz continuous Hessian, e.g., in \cite{xu2017newton}, which are both required for GIANT.
    Although the theory of DINGO does not require these, it still assumes additional unconventional properties of the Hessian and, in addition, is restricted to invex problems, as a strict sub-class of general non-convex models.
    Furthermore, in our analysis, we don't assume specific function form or data distribution for applications of the form \eqref{eq: objective function 2}. For example, this is in contrast to GIANT, which is restricted to loss functions involving linear predictor models and specific data distributions.
    
    \item DINO is practically parameter free, in that it will converge regardless of the selected hyper-parameters. 
    This is in sharp contrast to many first-order methods. 
    The hyper-parameters of InexactDANE and AIDE require meticulous fine-tuning and these are sensitive to the given application.
    DINO is simple to tune and performs well across a variety of problems without modification of the hyper-parameters.
    
    \item The sub-problems of DINO are simple. 
    Like DINGO, the sub-problems of our method are simple linear least-squares problems for which efficient and robust direct and iterative solvers exists.
    In contrast, non-linear optimization sub-problems, such as those arising in InexactDANE and AIDE, can be difficult to solve and often involve additional hard to tune hyper-parameters.
\end{enumerate}

\subsection*{Notation and Definitions}

Throughout the paper, vectors and matrices are denoted by bold lower-case and bold upper-case letters, respectively, e.g., $ \vv $ and $ \VV $. 
We use regular lower-case and upper-case letters to denote scalar constants, e.g., $ d $  or $ L $. 
The common \textit{Euclidean inner product} is denoted by $\langle \xx,\yy \rangle = \xx^T\yy$ for $\xx,\yy\in\bbR^d$.
Given a vector $\vv$ and matrix $\AA$, we denote their vector $\ell_2$ norm and matrix \textit{spectral} norm as 
$\|\vv\|$ and $\|\AA\|$, respectively. 
The \textit{Moore--Penrose inverse} of $\AA$ is denoted by $\AA^\dagger$.
We let $\ww_t\in\mathbb{R}^{d}$ denote the point at iteration $t$. 
For notational convenience, we denote 
$\bgg_{t,i} \defeq \grad f_i(\ww_t)$,
$\bgg_{t} \defeq \grad f(\ww_t)$,
$\HH_{t,i} \defeq \grad^2 f_i(\ww_t)$ and 
$\HH_{t} \defeq \grad^2 f(\ww_t)$.
We also let
\begin{align}
\label{eq: H tilde and g tilde}
    \tilde{\HH}_{t,i} \defeq \begin{bmatrix} \HH_{t,i} \\ \phi\eye \end{bmatrix} \in\bbR^{2d\times d} 
    \quad\text{and}\quad
    \tilde{\bgg}_t \defeq \begin{pmatrix} \bgg_t \\ \zero \end{pmatrix} \in\bbR^{2d},
\end{align}
where $\phi>0$, $\eye$ is the identity matrix, and $\zero$ is the zero vector. 
We say that a \textit{communication round} is performed when the driver uses a \textit{broadcast} or \textit{reduce} operation to send or receive information to or from the workers in parallel, respectively.
For example, computing the gradient $\bgg_t$ requires two communication rounds, i.e., the driver broadcasts $\ww_t$ and then, using a reduce operation, receives $\bgg_{t,i}$ from all workers to then form $\bgg_t=\sum_{i=1}^{m}\bgg_{t,i}/m$.
\section{Derivation}\label{derivation}

In this section, we describe the derivation of DINO, as depicted in Algorithm~\ref{alg: Our Method}.
Each iteration $t$ involves computing an update direction $\pp_t$ and a step-size $\alpha_t$ and then forming the next iterate $\ww_{t+1}=\ww_t+\alpha_t\pp_t$.

\subsection*{Update Direction}
When forming the update direction $\pp_t$, computing $\tilde{\HH}_{t,i}^\dagger\tilde{\bgg}_t$ and $(\tilde{\HH}_{t,i}^T\tilde{\HH}_{t,i})^{-1}\bgg_t$ constitute the sub-problems of DINO, where $\tilde{\HH}_{t,i}$ and $\tilde{\bgg}_t$ are as in \eqref{eq: H tilde and g tilde}.
Despite these being the solutions of simple linear least-squares problems, it is still unreasonable to assume these will be computed exactly. In this light, we only require that the approximate solutions satisfy the following conditions.

\begin{condition}[Inexactness Condition]\label{condition: inexactness condition}
    For all iterations $t$, all worker machines $i=1,\ldots,m$ are able to compute approximations $\vv_{t,i}^{(1)}$ and $\vv_{t,i}^{(2)}$ of $\tilde{\HH}_{t,i}^\dagger\tilde{\bgg}_t$ and $(\tilde{\HH}_{t,i}^T\tilde{\HH}_{t,i})^{-1}\bgg_t$, respectively, that satisfy:
    \begin{subequations}\label{eqs: inexactness conditions}
        \begin{align}
            \|\tilde{\HH}_{t,i}^T\tilde{\HH}_{t,i}\vv_{t,i}^{(1)} - \HH_{t,i}\bgg_t\|
            &\leq \varepsilon_i^{(1)}\|\HH_{t,i}\bgg_t\|, \label{eq: inexactness condition 1} \\
            \|\tilde{\HH}_{t,i}^T\tilde{\HH}_{t,i}\vv_{t,i}^{(2)} - \bgg_t\|
            &\leq \varepsilon_i^{(2)}\|\bgg_t\|, \label{eq: inexactness condition 2} \\
            \langle \vv_{t,i}^{(2)}, \bgg_t \rangle 
            &> 0, \label{eq: inexactness condition 3}
        \end{align}
    \end{subequations}
    where $0\leq\varepsilon_i^{(1)},\varepsilon_i^{(2)}<1$ are constants.
\end{condition}

For practical implementations of DINO, as with DINGO, DiSCO and GIANT, we don't need to compute or store explicitly formed Hessian matrices, i.e., our implementations are Hessian-free. 
Namely, approximations of $\tilde{\HH}_{t,i}^\dagger\tilde{\bgg}_t$ and $(\tilde{\HH}_{t,i}^T\tilde{\HH}_{t,i})^{-1}\bgg_t$ can be efficiently computed using iterative least-squares solvers that only require access to Hessian-vector products, such as in our implementation described in Section~\ref{section: experiments}. 
These products can be computed at a similar cost to computing the gradient \cite{schraudolph2002fast}.
Hence, DINO is applicable to \eqref{eq: objective function} with a large dimension $d$.

Condition~\ref{condition: inexactness condition} has a significant practical benefit.
Namely, the criteria \eqref{eqs: inexactness conditions} are practically verifiable as they don't involve any unknowable terms.
Requiring $\varepsilon_i^{(1)},\varepsilon_i^{(2)}<1$ ensures that the approximations are simply better than the zero vector.
The condition in \eqref{eq: inexactness condition 3} is always guaranteed if one uses the conjugate gradient method (CG) \cite{nocedal2006numerical}, regardless of the number of CG iterations.

We now derive the update direction $\pp_t$ for iteration $t$.
Our approach is to construct $\pp_t$ so that it is a suitable descent direction of \eqref{eq: objective function}.
Namely, it satisfies $\langle \pp_t, \bgg_t \rangle \leq -\theta\|\bgg_t\|^2$, where $\theta$ is a selected hyper-parameter of DINO.
We begin by distributively computing the gradient, $\bgg_t$, of \eqref{eq: objective function} and then broadcast it to all workers. 
Each worker $i$ computes the vector $\vv_{t,i}^{(1)}$, as in \eqref{eq: inexactness condition 1}, lets $\pp_{t,i}=-\vv_{t,i}^{(1)}$ and checks the condition $\langle \vv_{t,i}^{(1)}, \bgg_t \rangle \geq \theta\|\bgg_t\|^2$. 

All workers $i$ in 
\begin{equation}\label{eq: set of Case 3* iteration indices}
    \mathcal{I}_{t}
    \defeq 
    \big\{ i=1,\ldots,m \mid \langle \vv_{t,i}^{(1)}, \bgg_t \rangle < \theta\|\bgg_t\|^2 \big\},
\end{equation}
has a local update direction $\pp_{t,i}$ that is not a suitable descent direction of \eqref{eq: objective function}, as $\langle \pp_{t,i}, \bgg_t \rangle > -\theta\|\bgg_t\|^2$.
We now enforce descent in their local update direction.
For this, we consider the problem
\begin{align}
    \min_{\pp_{t,i}} & \quad \|\HH_{t,i}\pp_{t,i}+\bgg_t\|^2 + \phi^{2}\|\pp_{t,i}\|^2 \label{eq:constrained_prob}\\
    \text{s.t.} & \hspace{10mm} \langle\pp_{t,i},\bgg_t\rangle\leq-\theta\|\bgg_t\|^2, \nonumber
\end{align}
where $\phi$ is a selected hyper-parameter of DINO as in \eqref{eq: H tilde and g tilde}. 
It is easy to see that, when $\langle \tilde{\HH}_{t,i}^\dagger\tilde{\bgg}_t, \bgg_t \rangle < \theta\|\bgg_t\|^2$, the problem \eqref{eq:constrained_prob} has the exact solution
\begin{align*}
    \pp_{t,i} 
    &= - \tilde{\HH}_{t,i}^\dagger\tilde{\bgg}_t - \lambda_{t,i}(\tilde{\HH}_{t,i}^T\tilde{\HH}_{t,i})^{-1}\bgg_t, \\
    \lambda_{t,i} 
    &= \frac{-\langle \tilde{\HH}_{t,i}^\dagger\tilde{\bgg}_t, \bgg_t \rangle + \theta\|\bgg_t\|^2}
    {\big\langle (\tilde{\HH}_{t,i}^T\tilde{\HH}_{t,i})^{-1}\bgg_t, \bgg_t \big\rangle} 
    > 0.
\end{align*}
Therefore, to enforce descent, each worker $i\in\mathcal{I}_t$ computes
\begin{align*}
    \pp_{t,i} 
    &= - \vv_{t,i}^{(1)} - \lambda_{t,i}\vv_{t,i}^{(2)}, \\
    \lambda_{t,i} 
    &= \frac{-\langle \vv_{t,i}^{(1)}, \bgg_t \rangle + \theta\|\bgg_t\|^2}
    {\langle \vv_{t,i}^{(2)}, \bgg_t \rangle} 
    > 0,
\end{align*}
where $\vv_{t,i}^{(2)}$ is as in \eqref{eq: inexactness condition 2} and \eqref{eq: inexactness condition 3}.
The term $\lambda_{t,i}$ is positive by the definition of $\mathcal{I}_t$ and the condition in \eqref{eq: inexactness condition 3}.
This local update direction $\pp_{t,i}$ has the property that $\langle \pp_{t,i}, \bgg_t \rangle = -\theta\|\bgg_t\|^2$.
Using a reduce operation, the driver obtains the update direction $\pp_t=\sum_{i=1}^{m}\pp_{t,i}/m$.
By construction, $\pp_t$ is now guaranteed to be a descent direction for \eqref{eq: objective function} satisfying $\langle \pp_t,\bgg_t \rangle \leq -\theta\|\bgg_t\|^2$.

\subsection*{Step-Size}

We use Armijo line search to compute a step-size $\alpha_t$.
Namely, we choose the largest $\alpha_t > 0 $ such that
\begin{equation}\label{eq: Armijo-type line seach}
    f(\ww+\alpha_t\pp_t) \leq f(\ww_t) + \alpha_t\rho\langle\pp_t,\bgg_t\rangle,
\end{equation}
with some constant $\rho\in(0,1)$. 
As $\pp_t$ is always a descent direction, we obtain a strict decrease in the function value.
This happens regardless of the selected hyper-parameters $\theta$ and $\phi$.
Line search can be conducted distributively in parallel with two communication rounds, such as in our implementation in Section~\ref{section: experiments}. 
DINO only transmits vectors of size linear in dimension $d$, i.e., $\mathcal{O}(d)$. 
This is an important property of distributed optimization methods and is consistent with DINGO, DiSCO, DANE, InexactDANE and AIDE.

\begin{algorithm}[t]
	\caption{DINO}\label{alg: Our Method}
    \begin{algorithmic}[1] 
    	\STATE{\textbf{input} initial point $\ww_0\in\mathbb{R}^d$, 
        	gradient tolerance $\delta\geq0$, maximum iterations $T$, 
        	line search parameter $\rho\in(0,1)$, parameter $\theta>0$ 
        	and regularization parameter $\phi>0$ as in \eqref{eq: H tilde and g tilde}.}
    	\FOR{$t=0,1,2,\ldots,T-1$}
            \STATE{Distributively compute the full gradient $\bgg_t$.}
            \IF{$\|\bgg_t\| \leq \delta$}
                \STATE{\textbf{return} $\ww_t$}
            \ELSE
                \STATE{The driver broadcasts $\bgg_t$ and, in parallel, each worker $i$ computes $\vv_{t,i}^{(1)}$ in \eqref{eq: inexactness condition 1}}.
                \STATE{In parallel, each worker $i$, such that $\langle \vv_{t,i}^{(1)}, \bgg_t \rangle \geq \theta\|\bgg_t\|^2$, lets $\pp_{t,i}=-\vv_{t,i}^{(1)}$.}
                \STATE{In parallel, each worker $i$, such that $\langle \vv_{t,i}^{(1)}, \bgg_t \rangle < \theta\|\bgg_t\|^2$, computes
                \begin{align*}
                    \pp_{t,i} 
                    &= - \vv_{t,i}^{(1)} - \lambda_{t,i}\vv_{t,i}^{(2)}, \\
                    \lambda_{t,i} 
                    &= \frac{-\langle \vv_{t,i}^{(1)}, \bgg_t \rangle + \theta\|\bgg_t\|^2}
                    {\langle \vv_{t,i}^{(2)}, \bgg_t \rangle} 
                    > 0,
                \end{align*}
                where $\vv_{t,i}^{(2)}$ is as in \eqref{eq: inexactness condition 2} and \eqref{eq: inexactness condition 3}.
                    }
                    \STATE{Using a reduce operation, the driver computes $\pp_t = \frac{1}{m}\sum_{i=1}^{m}\pp_{t,i}$.}
                \STATE{Choose the largest $\alpha_t > 0$ such that 
                    $$
                        f(\ww+\alpha_t\pp_t) \leq f(\ww_t) + \alpha_t\rho\langle\pp_t,\bgg_t\rangle.
                    $$
                    }
                \STATE{The driver computes $\ww_{t+1} = \ww_{t} + \alpha_t\pp_t$.}
            \ENDIF
        \ENDFOR
        \STATE{\textbf{return} $\ww_{T}$.}
    \end{algorithmic}
\end{algorithm}
\section{Analysis}\label{section: analysis}

In this section, we present convergence results for DINO. 
We assume that $f$, in \eqref{eq: objective function}, attains its minimum on some non-empty subset of $\mathbb{R}^d$ and we denote the corresponding optimal function value by $f^*$.
As was previously mentioned, we are able to show global sub-linear convergence under minimal assumptions. 
Specifically, we only assume that the local gradient $\grad f_i$, on each worker $i$, is Lipschitz continuous.

\begin{assumption}[Local Lipschitz Continuity of Gradient]\label{assumption: local Lipschitz continuity}
    The function $f_i$ in \eqref{eq: objective function} is twice differentiable for all $i=1,\ldots,m$.
    Moreover, for all $i=1,\ldots,m$, there exists constants $L_i\in(0,\infty)$ such that
    \begin{equation*}
        \big\|\grad f_i(\xx)-\grad f_i(\yy)\big\| \leq L_i\|\xx-\yy\|,
    \end{equation*}
    for all $\xx,\yy\in\mathbb{R}^d$.
\end{assumption}

As already mentioned, assuming Lipschitz continuous gradient is common place.
Recall that Assumption~\ref{assumption: local Lipschitz continuity} implies
\begin{equation}\label{eq: Lipschitz}
    \big\|\grad f(\xx)-\grad f(\yy)\big\| \leq L\|\xx-\yy\|,
\end{equation}
for all $\xx,\yy\in\mathbb{R}^d$, where $L\defeq\sum_{i=1}^{m}L_i/m$. This in turn gives for all $\alpha\geq0$
\begin{equation}\label{eq: smoothness corollary}
	f(\ww_t+\alpha\pp_t) 
	\leq 
	f(\ww_t) + \alpha\big\langle \pp_t,\bgg_t \big\rangle + \frac{\alpha^2L}{2}\|\pp_t\|^2.
\end{equation}

Recall, for a linear system $\AA\xx=\bb$, with non-singular square matrix $\AA$, the condition number of $\AA$ is $\kappa(\AA) \defeq \|\AA^{-1}\|\|\AA\|$. 
As $\tilde{\HH}_{t,i}$ has full column rank, the matrix $\tilde{\HH}_{t,i}^T\tilde{\HH}_{t,i}$ is invertible and, under Assumption~\ref{assumption: local Lipschitz continuity}, has condition number at most $(L_i^2+\phi^2)/\phi^2$.  
Therefore, under Condition~\ref{condition: inexactness condition} and Assumption~\ref{assumption: local Lipschitz continuity} we have
\begin{subequations}\label{eqs: inexactness conditions corollary}
    \begin{align}
        \|\vv_{t,i}^{(1)} - \tilde{\HH}_{t,i}^\dagger\tilde{\bgg}_t\|
        &\leq \varepsilon_i^{(1)}\bigg(\frac{L_i^2+\phi^2}{\phi^2}\bigg) 
        \|\tilde{\HH}_{t,i}^\dagger\tilde{\bgg}_t\|, \label{eq: v1 inexactness conditions corollary} \\
        \big\| \vv_{t,i}^{(2)} - (\tilde{\HH}_{t,i}^T\tilde{\HH}_{t,i})^{-1}\bgg_t \big\|
        &\leq \varepsilon_i^{(2)}\bigg(\frac{L_i^2+\phi^2}{\phi^2}\bigg)  
        \big\| (\tilde{\HH}_{t,i}^T\tilde{\HH}_{t,i})^{-1}\bgg_t \big\|, \label{eq: v2 inexactness conditions corollary}
    \end{align}
\end{subequations}
for all iterations $t$ and all workers $i=1,\ldots,m$.
We also have the upper bound
\begin{equation}\label{eq: H tilde dagger norm bound}
    \|\tilde{\HH}_{t,i}^\dagger\| \leq \frac{1}{\phi},
\end{equation}
for all iterations $t$ and all $i=1,\ldots,m$; see \cite{crane2019dingo} for a proof.

\begin{theorem}[Convergence of DINO]\label{theorem: convergence}
	Suppose Assumption~\ref{assumption: local Lipschitz continuity} holds and that we run Algorithm~\ref{alg: Our Method} with inexact update such that Condition~\ref{condition: inexactness condition} holds with $\varepsilon_i^{(2)} < \sqrt{\phi^2/(L_i^2+\phi^2)}$ for all $i=1,\ldots,m$. 
    Then for all iterations $t$ we have $f(\ww_{t+1}) \leq f(\ww_t) - \tau\rho\theta\|\bgg_t\|^{2}$ with constants
    \begin{subequations}\label{eq: case 3 inexact convergence}
        \begin{align}
            \tau &= \frac{2(1-\rho)\theta}{La^2}, \label{eq: convergence tau} \\
            a &= 
            \frac{1}{\phi} 
            \Bigg( 1 + \frac{1}{m}\sum_{i=1}^{m} \varepsilon_i^{(1)} \frac{L_i^2+\phi^2}{\phi^2} \Bigg) 
            + \frac{1}{m} \sum_{i=1}^{m} b_i, \label{eq: convergence a} \\
            b_i &= 
            \Bigg(
            \frac{1+\varepsilon_i^{(2)}(L_i^2+\phi^2)/\phi^2}{1-\varepsilon_i^{(2)}\sqrt{(L_i^2+\phi^2)/\phi^2}}
            \Bigg)
            \Bigg(
            \frac{1}{\phi} \bigg(1+\varepsilon_i^{(1)}\frac{L_i^2+\phi^2}{\phi^2}\bigg) + \theta
            \Bigg) 
            \sqrt{\frac{L_i^2+\phi^2}{\phi^2}}, \label{eq: convergence b}
        \end{align}
    \end{subequations}
    where $\rho, \theta$ and $\phi$ are as in Algorithm~\ref{alg: Our Method}, 
    $L_i$ are as in Assumption~\ref{assumption: local Lipschitz continuity}, 
    $L$ is as in \eqref{eq: Lipschitz},
    $\varepsilon_i^{(1)}$ are as in \eqref{eq: inexactness condition 1},
    and $\varepsilon_i^{(2)}$ are as in \eqref{eq: inexactness condition 2}.
\end{theorem}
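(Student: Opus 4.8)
The whole argument rests on two properties of the step $\pp_t$: it is a descent direction, $\langle\pp_t,\bgg_t\rangle\le-\theta\|\bgg_t\|^2$, which was already established in the derivation of Algorithm~\ref{alg: Our Method}, and a norm bound $\|\pp_t\|\le a\|\bgg_t\|$ with $a$ as in \eqref{eq: convergence a}, which is the only new ingredient. Granting both, I would combine them with the smoothness estimate \eqref{eq: smoothness corollary}: the Armijo inequality \eqref{eq: Armijo-type line seach} at trial step $\alpha$ follows once $\tfrac{\alpha L}{2}\|\pp_t\|^2\le(1-\rho)\big(-\langle\pp_t,\bgg_t\rangle\big)$, and since the left side is increasing in $\alpha$ while $-\langle\pp_t,\bgg_t\rangle\ge\theta\|\bgg_t\|^2$ and $\|\pp_t\|^2\le a^2\|\bgg_t\|^2$, this is satisfied for every $\alpha\in(0,\tau]$ with $\tau=2(1-\rho)\theta/(La^2)$ exactly as in \eqref{eq: convergence tau}. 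Hence the line search returns $\alpha_t\ge\tau$, and \eqref{eq: Armijo-type line seach} gives $f(\ww_{t+1})\le f(\ww_t)+\alpha_t\rho\langle\pp_t,\bgg_t\rangle\le f(\ww_t)-\alpha_t\rho\theta\|\bgg_t\|^2\le f(\ww_t)-\tau\rho\theta\|\bgg_t\|^2$. (One may assume $\bgg_t\ne\zero$, since otherwise the algorithm has already returned; then $\pp_t\ne\zero$ and no division by zero occurs.)

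So the work is to bound $\|\pp_t\|=\big\|\tfrac1m\sum_{i=1}^m\pp_{t,i}\big\|\le\tfrac1m\sum_{i=1}^m\|\pp_{t,i}\|$, and the constants $a$ and $b_i$ are built exactly from the per-worker bounds. Writing $\kappa_i\defeq(L_i^2+\phi^2)/\phi^2$, the corollaries \eqref{eq: v1 inexactness conditions corollary} and \eqref{eq: H tilde dagger norm bound} together with $\|\tilde{\bgg}_t\|=\|\bgg_t\|$ give $\|\vv_{t,i}^{(1)}\|\le\tfrac1\phi(1+\varepsilon_i^{(1)}\kappa_i)\|\bgg_t\|$, which handles workers $i\notin\mathcal{I}_t$ outright since there $\pp_{t,i}=-\vv_{t,i}^{(1)}$. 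For $i\in\mathcal{I}_t$ the extra contribution is $\lambda_{t,i}\vv_{t,i}^{(2)}$; Cauchy--Schwarz bounds the numerator of $\lambda_{t,i}$ by $\big(\tfrac1\phi(1+\varepsilon_i^{(1)}\kappa_i)+\theta\big)\|\bgg_t\|^2$ --- the middle factor of $b_i$ --- so it remains to establish the dimensionless bound
\begin{equation*}
    \frac{\|\vv_{t,i}^{(2)}\|\,\|\bgg_t\|}{\langle\vv_{t,i}^{(2)},\bgg_t\rangle}\ \le\ \frac{1+\varepsilon_i^{(2)}\kappa_i}{1-\varepsilon_i^{(2)}\sqrt{\kappa_i}}\,\sqrt{\kappa_i},
\end{equation*}
after which $\lambda_{t,i}\|\vv_{t,i}^{(2)}\|\le b_i\|\bgg_t\|$ and, summing over $i$, $\|\pp_t\|\le a\|\bgg_t\|$ with $a$ exactly \eqref{eq: convergence a}.

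This last inequality is the crux, and the only place the hypothesis $\varepsilon_i^{(2)}<\sqrt{\phi^2/(L_i^2+\phi^2)}=1/\sqrt{\kappa_i}$ enters. I would work in the geometry induced by $\AA\defeq\tilde{\HH}_{t,i}^T\tilde{\HH}_{t,i}=\HH_{t,i}^2+\phi^2\eye$, whose eigenvalues lie in $[\phi^2,L_i^2+\phi^2]$ under Assumption~\ref{assumption: local Lipschitz continuity}, so $\|\AA^{-1/2}\|\le1/\phi$ and $\|\AA^{-1/2}\bgg_t\|\ge\|\bgg_t\|/\sqrt{L_i^2+\phi^2}$. For the numerator I use \eqref{eq: v2 inexactness conditions corollary} with $\|\AA^{-1}\bgg_t\|\le\tfrac1\phi\|\AA^{-1/2}\bgg_t\|$; for the denominator I rewrite $\langle\vv_{t,i}^{(2)},\bgg_t\rangle=\langle\bgg_t,\AA^{-1}\bgg_t\rangle+\langle\AA\vv_{t,i}^{(2)}-\bgg_t,\AA^{-1}\bgg_t\rangle$ and apply \eqref{eq: inexactness condition 2}, obtaining $\langle\vv_{t,i}^{(2)},\bgg_t\rangle\ge\|\AA^{-1/2}\bgg_t\|\big(\|\AA^{-1/2}\bgg_t\|-\tfrac{\varepsilon_i^{(2)}}{\phi}\|\bgg_t\|\big)$; the lower bound on $\|\AA^{-1/2}\bgg_t\|$ makes the parenthesised factor at least $\tfrac{\|\bgg_t\|}{\phi}\big(1/\sqrt{\kappa_i}-\varepsilon_i^{(2)}\big)$, which is strictly positive precisely by the assumed bound on $\varepsilon_i^{(2)}$. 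Dividing the numerator bound by the denominator bound and clearing $\sqrt{\kappa_i}$ gives the displayed ratio. Everything else --- the descent property (Section~\ref{derivation}), the per-worker algebra, and the step-size computation --- is routine manipulation on top of \eqref{eqs: inexactness conditions corollary}--\eqref{eq: H tilde dagger norm bound}. The main obstacle, then, is correctly handling the $\AA^{-1/2}$-weighted estimate of $\langle\vv_{t,i}^{(2)},\bgg_t\rangle$ so that the threshold on $\varepsilon_i^{(2)}$ comes out sharp.
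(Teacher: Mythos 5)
Your proposal is correct and follows essentially the same route as the paper's proof: the per-worker bound $\|\pp_{t,i}\|\leq\big(\tfrac1\phi(1+\varepsilon_i^{(1)}\kappa_i)+b_i\big)\|\bgg_t\|$ via \eqref{eqs: inexactness conditions corollary}--\eqref{eq: H tilde dagger norm bound}, the same decomposition $\langle\vv_{t,i}^{(2)},\bgg_t\rangle=\langle\AA^{-1}\bgg_t,\bgg_t\rangle+\langle\AA\vv_{t,i}^{(2)}-\bgg_t,\AA^{-1}\bgg_t\rangle$ driving the lower bound where $\varepsilon_i^{(2)}<1/\sqrt{\kappa_i}$ enters, and the identical Armijo/step-size argument giving $\alpha_t\geq\tau$. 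Your $\AA^{-1/2}$-weighted phrasing is just the paper's $\|(\tilde{\HH}_{t,i}^T)^\dagger\bgg_t\|$ written differently, so no substantive deviation or gap.
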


\begin{proof}
    Recall that for iteration $t$, each worker $i\in\mathcal{I}_t$, as defined in \eqref{eq: set of Case 3* iteration indices}, computes
    \begin{align*}
        \pp_{t,i} 
        &= - \vv_{t,i}^{(1)} - \lambda_{t,i}\vv_{t,i}^{(2)}, \\
        \lambda_{t,i} 
        &= \frac{-\langle \vv_{t,i}^{(1)}, \bgg_t \rangle + \theta\|\bgg_t\|^2}
        {\langle \vv_{t,i}^{(2)}, \bgg_t \rangle} 
        > 0.
    \end{align*}
    The term $\lambda_{t,i}$ is both well-defined and positive by the definition of $\mathcal{I}_{t}$ and the condition in \eqref{eq: inexactness condition 3}.
    The inexactness condition in \eqref{eq: inexactness condition 2} implies
    \begin{align*}
        - \langle \vv_{t,i}^{(2)}, \bgg_t \rangle 
        + \big\langle (\tilde{\HH}_{t,i}^T\tilde{\HH}_{t,i})^{-1}\bgg_t,\bgg_t \big\rangle
        &=
        - \big\langle  
        \tilde{\HH}_{t,i}^T\tilde{\HH}_{t,i}\vv_{t,i}^{(2)} - \bgg_t,
        (\tilde{\HH}_{t,i}^T\tilde{\HH}_{t,i})^{-1}\bgg_t
        \big\rangle \\
        &\leq \varepsilon_i^{(2)} \|\bgg_t\| \big\|(\tilde{\HH}_{t,i}^T\tilde{\HH}_{t,i})^{-1}\bgg_t\big\|.
    \end{align*}
    By Assumption~\ref{assumption: local Lipschitz continuity}, we have
    \begin{equation*}
        \varepsilon_i^{(2)} \|\bgg_t\| \big\|(\tilde{\HH}_{t,i}^T\tilde{\HH}_{t,i})^{-1}\bgg_t\big\|
        \leq 
        \varepsilon_i^{(2)} \sqrt{\frac{L_i^2+\phi^2}{\phi^2}} 
        \big\langle (\tilde{\HH}_{t,i}^T\tilde{\HH}_{t,i})^{-1}\bgg_t,\bgg_t \big\rangle.
    \end{equation*}
    Therefore,
    \begin{equation*}
        \langle \vv_{t,i}^{(2)}, \bgg_t \rangle
        \geq 
        \Bigg( 1 - \varepsilon_i^{(2)} \sqrt{\frac{L_i^2+\phi^2}{\phi^2}} \Bigg)
        \big\langle (\tilde{\HH}_{t,i}^T\tilde{\HH}_{t,i})^{-1}\bgg_t,\bgg_t \big\rangle,
    \end{equation*}
    where the right-hand side is positive by the assumption $\varepsilon_i^{(2)} < \sqrt{\phi^2/(L_i^2+\phi^2)}$ and the condition $\|\bgg_t\| > \delta$ in Algorithm~\ref{alg: Our Method}.
    
    It follows from \eqref{eqs: inexactness conditions corollary} and \eqref{eq: H tilde dagger norm bound} that
    \begin{align*}
        \lambda_{t,i} \|\vv_{t,i}^{(2)}\|
        &\leq 
        \Bigg(\frac{1+\varepsilon_i^{(2)}(L_i^2+\phi^2)/\phi^2}{1-\varepsilon_i^{(2)}\sqrt{(L_i^2+\phi^2)/\phi^2}}\Bigg)
        \big({-\langle \vv_{t,i}^{(1)},\bgg_t \rangle} + \theta\|\bgg_t\|^2\big)
        \frac{\|(\tilde{\HH}_{t,i}^T\tilde{\HH}_{t,i})^{-1}\bgg_t\big\|}{\big\|(\tilde{\HH}_{t,i}^T)^\dagger\bgg_t\big\|^2} \\
        &\leq 
        \frac{1}{\phi} 
        \Bigg(\frac{1+\varepsilon_i^{(2)}(L_i^2+\phi^2)/\phi^2}{1-\varepsilon_i^{(2)}\sqrt{(L_i^2+\phi^2)/\phi^2}}\Bigg)
        \Bigg(
        \frac{\|\vv_{t,i}^{(1)}\| \|\bgg_t\| + \theta\|\bgg_t\|^2}{\big\|(\tilde{\HH}_{t,i}^T)^\dagger\bgg_t\big\|}
        \Bigg) \\
        &\leq
        \Bigg(\frac{1+\varepsilon_i^{(2)}(L_i^2+\phi^2)/\phi^2}{1-\varepsilon_i^{(2)}\sqrt{(L_i^2+\phi^2)/\phi^2}}\Bigg)
        \Bigg(
        \frac{1}{\phi} \bigg(1+\varepsilon_i^{(1)}\frac{L_i^2+\phi^2}{\phi^2}\bigg) + \theta
        \Bigg) \sqrt{\frac{L_i^2+\phi^2}{\phi^2}} \|\bgg_t\|.
    \end{align*}
    This and \eqref{eq: v1 inexactness conditions corollary}, and how $\pp_{t,i} = - \vv_{t,i}^{(1)}$ for $i\notin\mathcal{I}_t$, imply
    \begin{equation*}
        \|\pp_t\| 
        \leq \frac{1}{m}\bigg(\sum_{i\notin \mathcal{I}_t}\|\pp_{t,i}\| + \sum_{i\in \mathcal{I}_t}\|\pp_{t,i}\|\bigg)
        \leq \frac{1}{m}\bigg(\sum_{i=1}^{m} \|\vv_{t,i}^{(1)}\| 
        + \sum_{i\in \mathcal{I}_t} \lambda_{t,i} \|\vv_{t,i}^{(2)}\|\bigg)
        \leq a \|\bgg_t\|,
    \end{equation*}
    where $a$ is as in \eqref{eq: convergence a}.
    This and \eqref{eq: smoothness corollary} imply
    \begin{equation}\label{eq: alpha inequality 1}
        f(\ww_t+\alpha\pp_t)
        \leq f(\ww_t) + \alpha\langle \pp_t,\bgg_t \rangle + \frac{\alpha^2La^2}{2}\|\bgg_t\|^2,
    \end{equation}
    for all $\alpha\geq0$.
    
    For all $\alpha\in(0,\tau]$, where $\tau$ is as in \eqref{eq: convergence tau}, we have
    \begin{equation*}
        \frac{\alpha^2La^2}{2}\|\bgg_t\|^{2} 
        \leq \alpha(1-\rho)\theta\|\bgg_t\|^2,
    \end{equation*}
    and as $\langle \pp_t,\bgg_t \rangle \leq -\theta\|\bgg_t\|^2$, by construction, we obtain
    \begin{equation*}
        \frac{\alpha^2La^2}{2}\|\bgg_t\|^{2} 
        \leq \alpha(\rho-1)\langle\pp_t,\bgg_t\rangle.
    \end{equation*}
    From this and \eqref{eq: alpha inequality 1} we have
    \begin{align*}
        f(\ww_t+\alpha\pp_t)
        \leq f(\ww_t) + \alpha\rho\langle \pp_t,\bgg_t \rangle,
    \end{align*}
    for all $\alpha\in(0,\tau]$.
    Therefore, line-search \eqref{eq: Armijo-type line seach} will pass for some step-size $\alpha_t \geq \tau$.
    Moreover, $f(\ww_{t+1}) \leq f(\ww_t) - \tau\rho\theta\|\bgg_t\|^{2}$.
\end{proof}

Theorem~\ref{theorem: convergence} implies a global sub-linear convergence rate for DINO.
Namely, as $f^* \leq f(\ww_{t+1}) \leq f(\ww_t) - \tau\rho\theta\|\bgg_t\|^{2}$, we have
\begin{equation*}
    \sum_{k=1}^{t}\|\bgg_k\|^2 \leq \frac{f(\ww_0)-f^*}{\tau\rho\theta},
\end{equation*}
for all iterations $t$.
This implies $\lim_{t\rightarrow\infty}\|\bgg_t\|=0$ and
\begin{equation*}
    \min_{0\leq k\leq t} \big\{\|\bgg_k\|^2\big\} \leq \frac{f(\ww_0)-f^*}{t\tau\rho\theta},
\end{equation*}
for all iterations $t$.
Reducing the inexactness error $\varepsilon_i^{(1)}$ or $\varepsilon_i^{(2)}$ in Condition~\ref{condition: inexactness condition} lead to improved constants in the rate obtained in Theorem~\ref{theorem: convergence}.

The hyper-parameters $\theta$ and $\phi$ have intuitive effects on DINO. 
Increasing $\theta$ will also increase the chance of $\mathcal{I}_t$, in \eqref{eq: set of Case 3* iteration indices}, being large.
In fact, if $\mathcal{I}_t$ is empty for all iterations $t$, then, in Theorem~\ref{theorem: convergence}, the condition on $\varepsilon_i^{(2)}$ can be removed and the term $\tau$ can be improved to
\begin{equation*}
    \tau = \frac{2(1-\rho)\theta}{La^2}, \quad
    a = 
    \frac{1}{\phi} 
    \Bigg( 1 + \frac{1}{m}\sum_{i=1}^{m} \varepsilon_i^{(1)} \frac{L_i^2+\phi^2}{\phi^2} \Bigg).
\end{equation*}
The hyper-parameter $\phi$ controls the condition number of $\tilde{\HH}_{t,i}^T\tilde{\HH}_{t,i}$, which is at most $(L_i^2+\phi^2)/\phi^2$.
Increasing $\phi$ will decrease the condition number and make the sub-problems of DINO easier to solve, as can be seen in \eqref{eqs: inexactness conditions corollary}, while also causing a loss of curvature information in the update direction.
Also, the upper bound on $\varepsilon_i^{(2)}$ can be made arbitrarily close to $1$ by increasing $\phi$.
In practice, simply setting $\theta$ and $\phi$ to be small often gives the best performance.

Theorem~\ref{theorem: convergence} applies to arbitrary non-convex $\eqref{eq: objective function}$ satisfying the minimal Assumption~\ref{assumption: local Lipschitz continuity}.
Additional assumptions on the function class of \eqref{eq: objective function} can lead to improved convergence rates.
One such assumption is to relate the gradient $\grad f(\ww_t)$ to the distance of the current function value $f(\ww_t)$ to optimality $f^*$.
This can allow rates to be derived as iterates approach optimality.
A simple assumption of this type is the long-standing Polyak-Lojasiewicz (PL) inequality \cite{hamed2016linear}.
A function satisfies the PL inequality if there exists a constant $\mu>0$ such that
\begin{equation}\label{eq: PL inequality}
    f(\ww)-f^* \leq \frac{1}{\mu}\big\|\grad f(\ww)\big\|^2,
\end{equation}
for all $\ww\in\mathbb{R}^d$.
Under this inequality, DINO enjoys the following linear convergence rate.

\begin{corollary}[Convergence of DINO Under PL Inequality]\label{corollary: convergence PL}
	In addition to the assumptions of Theorem~\ref{theorem: convergence}, suppose that  
	the PL inequality \eqref{eq: PL inequality} holds and we run Algorithm~\ref{alg: Our Method}.
	Then for all iterations $t$ we have $f(\ww_{t+1})-f^* \leq (1-\tau\rho\mu\theta)\big(f(\ww_t)-f^*\big)$, 
	where $\rho$ and $\theta$ are as in Algorithm~\ref{alg: Our Method}, 
	$\tau$ is as in Theorem~\ref{theorem: convergence}, 
	and $\mu$ is as in \eqref{eq: PL inequality}.
	Moreover, for any choice of $\theta>0$ we have $0\leq 1-\tau\rho\mu\theta < 1$.
\end{corollary}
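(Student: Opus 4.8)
The plan is to chain the one-step decrease from Theorem~\ref{theorem: convergence} with the PL inequality \eqref{eq: PL inequality}. Theorem~\ref{theorem: convergence} already gives, for every iteration $t$, that $f(\ww_{t+1}) \leq f(\ww_t) - \tau\rho\theta\|\bgg_t\|^2$ with $\tau,\rho,\theta>0$. First I would rewrite \eqref{eq: PL inequality} at $\ww = \ww_t$ as $\|\bgg_t\|^2 \geq \mu\big(f(\ww_t)-f^*\big)$ and substitute it into this decrease; since $\tau\rho\theta>0$, this yields
\begin{equation*}
    f(\ww_{t+1}) \leq f(\ww_t) - \tau\rho\theta\mu\big(f(\ww_t)-f^*\big).
\end{equation*}
Subtracting $f^*$ from both sides and factoring gives exactly $f(\ww_{t+1})-f^* \leq (1-\tau\rho\mu\theta)\big(f(\ww_t)-f^*\big)$, which is the claimed linear rate. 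This part is a two-line calculation with no real obstacle.

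The only point that needs a short argument is the final claim $0 \leq 1-\tau\rho\mu\theta < 1$ for every $\theta>0$. The upper bound is immediate, since $\tau,\rho,\mu,\theta$ are all strictly positive, so $\tau\rho\mu\theta>0$ and hence $1-\tau\rho\mu\theta<1$. For the lower bound I would argue from non-negativity of the objective gap: if $f(\ww_0)=f^*$ then, under \eqref{eq: PL inequality}, $\bgg_0=\zero$ and the statement is vacuous; otherwise $f(\ww_0)-f^*>0$, and combining $f(\ww_1)-f^*\geq 0$ with the recursion just derived and dividing by the positive quantity $f(\ww_0)-f^*$ forces $1-\tau\rho\mu\theta\geq 0$.

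Alternatively, one can verify $\tau\rho\mu\theta\leq 1$ directly from the explicit constants, which I would mention as a more transparent route. The descent property implied by \eqref{eq: Lipschitz} gives $f(\ww)-f^* \geq \|\grad f(\ww)\|^2/(2L)$ for all $\ww$, which together with \eqref{eq: PL inequality} forces $\mu\leq 2L$; plugging this and the definition \eqref{eq: convergence tau} of $\tau$ into $\tau\rho\mu\theta$ and using $\rho(1-\rho)\leq 1/4$ yields $\tau\rho\mu\theta \leq \theta^2/a^2$. Inspecting \eqref{eq: convergence a}--\eqref{eq: convergence b} shows $a\geq\theta$ (each $b_i\geq\theta$, since in $b_i$ the first and third factors are at least $1$ and the middle factor is at least $\theta$; when $\mathcal{I}_t$ is always empty, $\theta\leq a$ instead follows from $\|\tilde{\HH}_{t,i}^\dagger\|\leq 1/\phi$ together with the defining inequality $\langle\vv_{t,i}^{(1)},\bgg_t\rangle\geq\theta\|\bgg_t\|^2$ of emptiness), so $\tau\rho\mu\theta\leq 1$. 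Either route closes the proof; the corollary is essentially immediate, and the only mild subtlety is the bookkeeping for the lower bound and the degenerate case $f\equiv f^*$, which I would dispatch in a single sentence.
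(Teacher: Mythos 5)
Your proof is correct, and the interesting part --- the bound $0\le 1-\tau\rho\mu\theta<1$ --- is obtained by a genuinely different route than the paper's. The first half (chaining the one-step decrease of Theorem~\ref{theorem: convergence} with \eqref{eq: PL inequality}) coincides with the paper. For the bound on the constant, the paper argues along the algorithm's own direction: starting from $\langle\pp_t,\bgg_t\rangle\le-\theta\|\bgg_t\|^2$ and $\|\pp_t\|\le a\|\bgg_t\|$ it minimizes the quadratic upper bound \eqref{eq: alpha inequality 2} over $\alpha$, concludes $\tfrac{\theta^2}{2La^2}\|\bgg_t\|^2\le f(\ww_t)-f^*$, and then uses PL to get $\theta\le\sqrt{2La^2/\mu}$, whence $\tau\rho\mu\theta\le 4\rho(1-\rho)\le1$. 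Your route 1 instead extracts nonnegativity a posteriori from the recursion together with $f(\ww_1)\ge f^*$, which is cheaper but, like the paper's argument, needs a non-degenerate first iteration (positive gap and an actual step taken); your route 2 is a static verification, combining $\mu\le2L$ (from $L$-smoothness plus PL, valid whenever $f$ is not identically $f^*$) with $a\ge\theta$ read off from \eqref{eq: convergence a}--\eqref{eq: convergence b}, giving $\tau\rho\mu\theta\le\theta^2/a^2\le1$; your handling of the "empty $\mathcal{I}_t$" variant of $a$ is a nice extra, though the corollary as stated only uses the $a$ of \eqref{eq: convergence tau}--\eqref{eq: convergence b}. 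Both of your routes are valid and arguably more elementary, while the paper's yields the algorithm-tied intermediate estimate $\theta\le\sqrt{2La^2/\mu}$; note also a small misattribution in route 1: if $f(\ww_0)=f^*$ then $\bgg_0=\zero$ follows from first-order optimality at a global minimizer of the differentiable $f$, not from \eqref{eq: PL inequality}, which controls the gap by the gradient and so gives no information when the gap vanishes. Finally, be aware that all three arguments (including the paper's) implicitly exclude the degenerate case $f\equiv f^*$, where PL holds for arbitrarily large $\mu$ and the claim about the constant could not hold unconditionally.
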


\begin{proof}
    As $f(\ww_{t+1}) \leq f(\ww_t) + \alpha_t\rho\langle \pp_t,\bgg_t \rangle$ and $\alpha_t\geq\tau$, the PL inequality, \eqref{eq: PL inequality}, implies
    \begin{equation*}
        f(\ww_{t+1}) - f(\ww_t)
        \leq \alpha_t\rho \langle \pp_t,\bgg_t \rangle
        \leq -\tau\rho\theta \|\bgg_t\|^2
        \leq -\tau\rho\mu\theta \big(f(\ww_t)-f^*\big),
    \end{equation*}
    which gives $f(\ww_{t+1})-f^* \leq (1-\tau\rho\mu\theta)\big(f(\ww_t)-f^*\big)$.

    From $\langle \pp_t,\bgg_t \rangle \leq -\theta\|\bgg_t\|^2$ and \eqref{eq: alpha inequality 1} we have
    \begin{equation}\label{eq: alpha inequality 2}
        f(\ww_t+\alpha\pp_t)
        \leq f(\ww_t) - \alpha\theta\|\bgg_t\|^2 + \frac{\alpha^2La^2}{2}\|\bgg_t\|^2,
    \end{equation}
    for all $\alpha\geq0$.
    The right-hand side of \eqref{eq: alpha inequality 2} is minimized when $\alpha=\theta/(La^2)$. 
    It has a minimum value of $f(\ww_t) - \big(\theta^2/(2La^2)\big)\|\bgg_t\|^2$, which, by \eqref{eq: alpha inequality 2}, must be at least $f^*$.
    This and \eqref{eq: PL inequality} imply
    \begin{equation*}
        \frac{\theta^2}{2La^2}\|\bgg_t\|^2 \leq f(\ww_t) - f^* \leq \frac{1}{\mu}\|\bgg_t\|^2,
    \end{equation*}
    which gives $\theta \leq \sqrt{2La^2/\mu}$.
    Therefore,
    \begin{equation*}
        \tau\rho\mu\theta
        =  \frac{2\rho\mu(1-\rho)\theta^2}{La^2}
        \leq 4\rho(1-\rho)
        \leq 1,
    \end{equation*}
    which implies $0\leq 1-\tau\rho\mu\theta < 1$.
\end{proof}
\begin{table*}[t]
    \caption{Number of iterations completed in one hour.
    In one column, the driver and all five worker machines are running on one node.
    In another, they are running on their own instances in the distributed computing environment over AWS.
    The performance improvement, going from one node to AWS, is also presented.
    We use the code from \cite{crane2019dingo} to replicate their AWS results.
    }\label{table: iterations}
    \vspace{10pt}
    \centering
    \small
    \begin{tabular}{@{} lccc @{}}
        \toprule
        & Number of Iterations & Number of Iterations & Change When \\
        & (Running on one Node) & (Running over AWS) & Going to AWS \\
        \midrule
        \textbf{DINO}                       & $ 3    $ & $ 12   $ & $ +300\% $ \\
        \textbf{DINGO} \cite{crane2019dingo}& $ 3    $ & $ 12   $ & $ +300\% $ \\
        \textbf{GIANT} \cite{GIANT}         & $ 4    $ & $ 18   $ & $ +350\% $ \\
        \textbf{DiSCO} \cite{DiSCO}         & $ 7    $ & $ 19   $ & $ +171\% $ \\
        \textbf{InexactDANE} \cite{AIDE}    & $ 213  $ & $ 486  $ & $ +128\% $ \\
        \textbf{AIDE} \cite{AIDE}           & $ 214  $ & $ 486  $ & $ +127\% $ \\
        \textbf{SGD} \cite{SGD}             & $ 1743 $ & $ 1187 $ & $  -32\% $ \\
        \bottomrule
    \end{tabular}
\end{table*}
\begin{figure*}[t]%
    \centering
    \subfigure{%
    \includegraphics[height=20pt]{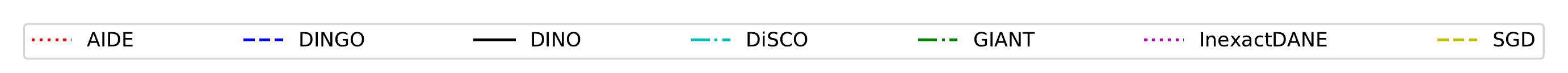}}%
    \\
    \addtocounter{subfigure}{-1}
    \subfigure{%
    \includegraphics[height=4.8cm]{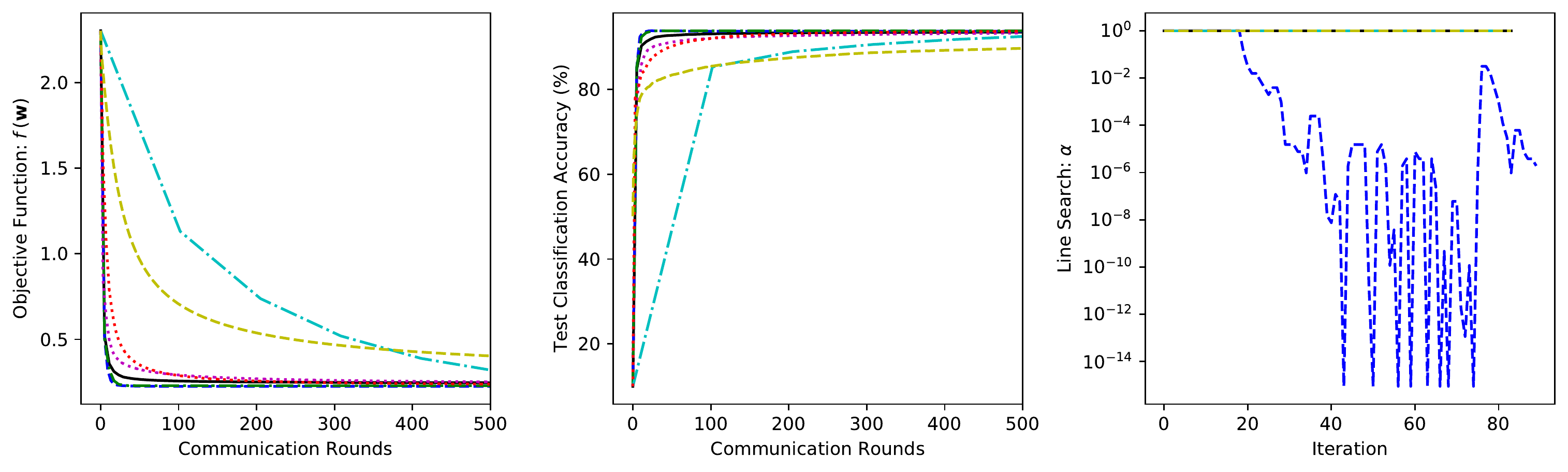}}%
    \caption{Softmax regression problem on the EMNIST Digits dataset.
    SVRG, in InexactDANE, and SGD both have a learning rate of $10^{-1}$ and AIDE has $\tau=1$.
    Here, we have five worker nodes, i.e., $m=5$ in \eqref{eq: objective function}.
    }
    \label{fig: softmax EMNIST}
\end{figure*}

The PL inequality has become widely recognized in both optimization and machine learning literature \cite{LeiYunwen2019SGDf}. 
The class of functions satisfying the condition contains strongly-convex functions as a sub-class and contains functions that are non-convex \cite{hamed2016linear}. 
This inequality has shown significant potential in the analysis of over-parameterized problems and is closely related to the property of interpolation \cite{bassily2018exponential,vaswani2019painless}.
Functions satisfying \eqref{eq: PL inequality} are a subclass of invex functions \cite{Newton-MR}.
Invexity is a generalization of convexity and was considered by DINGO.
Linear MLP and some linear ResNet are known to satisfy the PL inequality \cite{furusho2019skipping}.
\section{Experiments}\label{section: experiments}

\begin{figure*}[t]%
    \centering
    \subfigure{%
    \includegraphics[height=20pt]{figures/legend.pdf}}%
    \\
    \addtocounter{subfigure}{-1}
    \subfigure{%
    \includegraphics[height=4.8cm]{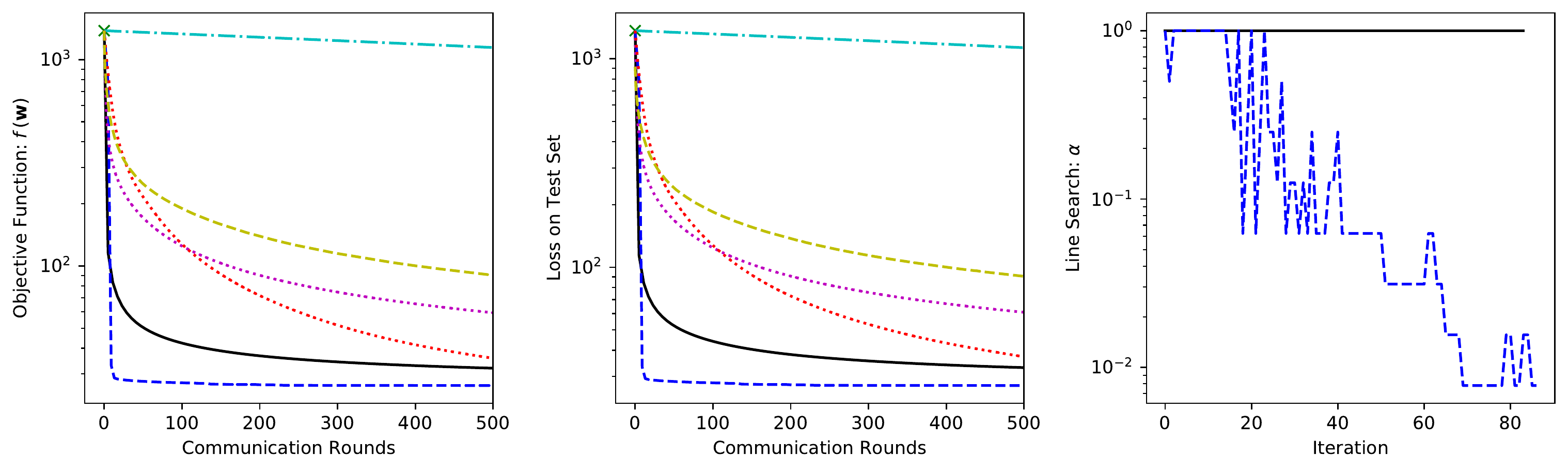}}%
    \caption{Non-linear least-squares problem on the CIFAR10 dataset.
    SVRG, in InexactDANE, and SGD have a learning rate of $10^{-4}$ and $10^{-3}$, respectively, and AIDE has $\tau=100$.
    GIANT failed immediately.
    Here, we have 50 worker nodes, i.e., $m=50$ in \eqref{eq: objective function}.
    }
    \label{fig: NLLS CIFAR10 50 workers}
\end{figure*}

In this section, we examine the empirical performance of DINO in comparison to the, previously discussed, distributed second-order methods DINGO, DiSCO, GIANT, InexactDANE and AIDE. 
We also compare these to synchronous SGD \cite{SGD}.
In all experiments, we consider \eqref{eq: objective function} with \eqref{eq: objective function 2}, where $S_1,\ldots,S_m$ partition $\{1,\ldots,n\}$ with each having equal size $n/m$.
In Table~\ref{table: iterations} and Figure~\ref{fig: softmax EMNIST}, we compare performance on the strongly convex problem of softmax cross-entropy minimization with regularization on the EMNIST Digits dataset.
In Figure~\ref{fig: NLLS CIFAR10 50 workers}, we consider the non-convex problem of non-linear least-squares without regularization on the CIFAR10 dataset with $\ell_j(\ww;\xx_j) = \big(y_j-\log(1+\exp{\langle \ww,\xx_j \rangle})\big)^2$ in \eqref{eq: objective function 2}, where $y_j$ is the label of $\xx_j$.
Although GIANT and DiSCO require strong convexity, we run them on this problem and indicate, with an ``$\times$" on the plot, if they fail.

We first describe some of the implementation details. 
The sub-problems of DINO, DINGO, DiSCO, GIANT and InexactDANE are limited to 50 iterations, without preconditioning. 
For DINO, we use the well known iterative least squares solvers LSMR \cite{LSMR} and CG to approximate $\vv_{t,i}^{(1)}$ and $\vv_{t,i}^{(2)}$ in Algorithm~\ref{alg: Our Method}, respectively.
For DINO, and DINGO as in \cite{crane2019dingo}, we use the hyper-parameters $\theta=10^{-4}$ and $\phi=10^{-6}$.
For DINO, DINGO and GIANT we use distributed backtracking line-search to select the largest step-size in $\{1,2^{-1},\ldots,2^{-50}\}$ that passes, with an Armijo line-search parameter of $10^{-4}$.
For InexactDANE, we set the hyper-parameters $\eta=1$ and $\mu=0$, as in \cite{AIDE}, which gave high performance in \cite{DANE}.
We also use the sub-problem solver SVRG \cite{SVRG} and report the best learning rate from $\{10^{-5},\ldots,10^5\}$.
We let AIDE call only one iteration of InexactDANE, which has the same parameters as the stand-alone InexactDANE algorithm.
We also report the best acceleration parameter, $\tau$ in \cite{AIDE}, from $\{10^{-5},\ldots,10^5\}$.
For SGD, we report the best learning rate from $\{10^{-5},\ldots,10^5\}$ and at each iteration all workers compute their gradient on a mini-batch of $n/(5m)$ data points.

The run-time is highly dependent on the distributed computing environment, which is evident in Table~\ref{table: iterations}.
Here, we run the methods on a single node on our local compute cluster.
We also run them over a distributed environment comprised of six Amazon Elastic Compute Cloud instances via Amazon Web Services (AWS).
These instances are located in Ireland, Ohio, Oregon, Singapore, Sydney and Tokyo.
This setup is to highlight the effect of communication costs on run-time.
As can be seen in Table~\ref{table: iterations}, the second-order methods experience a notable speedup when going to the more powerful AWS setup, whereas SGD experiences a slowdown.
DINO, DINGO and GIANT performed the most local computation in our experiments and they also had the largest increase in iterations.
Similar behaviour to that in Table~\ref{table: iterations} can also be observed for the non-linear least-squares problem.

In Figures~\ref{fig: softmax EMNIST} and \ref{fig: NLLS CIFAR10 50 workers}, we compare the number of communication rounds required to achieve descent. 
We choose communication rounds as the metric, as time is highly dependent on the network.
DINO is competitive with the other second-order methods, which all outperform SGD.
Recall that InexactDANE and AIDE are difficult to tune.
Between Figures~\ref{fig: softmax EMNIST} and \ref{fig: NLLS CIFAR10 50 workers}, notice the significant difference in the selected learning rate for SVRG of InexactDANE and the acceleration parameter of AIDE.
Meanwhile, DINO and DINGO have consistent performance, despite not changing hyper-parameters.

In the non-convex problem in Figure~\ref{fig: NLLS CIFAR10 50 workers}, GIANT fails immediately as CG fails on all 50 worker nodes. 
DiSCO does not fail and has poor performance. 
This suggests that locally around the initial point $\ww_0$, the full function $f$ is exhibiting convexity, while the local functions $f_i$ are not.
Moreover, in Figure~\ref{fig: NLLS CIFAR10 50 workers} the dimension $d$, which is $3072$, is larger than the number of training samples, $1000$, on each worker node.

\section{Conclusion and Future Work}\label{section: future}
In the context of centralized distributed computing environment, we present a novel distributed Newton-type method, named DINO, which enjoys several advantageous properties. DINO is guaranteed to converge under minimal assumption, its analysis is simple and intuitive, it is practically parameter free, and it can be applied to arbitrary non-convex functions and data distributions. Numerical simulations highlight some of these properties.
 
The following is left for future work. 
First, characterizing the relationship between the hyper-parameters $\theta$ and $\phi$ of DINO.
As was discussed, they have intuitive effects on the algorithm and they are easy to tune.
However, there is a non-trivial trade off between them that will be explored in future work.
Second, analysing the connection between DINO and over-parameterized problems.
Finally, extending the theory of DINO to alternative forms of line search, such as having each worker perform local line search and then aggregating this information in a way that preserves particular properties.
\bibliography{bibliography}

\bibliographystyle{unsrt}

\end{document}